\numberwithin{equation}{section}
\newtheorem{Theorem}{Theorem}[section]
\newtheorem*{Theorem*}{Theorem}
\newtheorem{Lemma}[Theorem]{Lemma}
\newtheorem{Proposition}[Theorem]{Proposition}
\newtheorem{Corollary}[Theorem]{Corollary}
\theoremstyle{definition}
\newtheorem{Definition}[Theorem]{Definition}
\theoremstyle{remark}
\newtheorem{Remark}[Theorem]{Remark}
\newtheorem*{proofofDM}{Proof of Theorem~\ref{PDME}}
\newtheorem{Question}[Theorem]{Question}
\newtheorem{Example}[Theorem]{Example}
\newbox\squ  % box character for ends of proofs
\renewcommand{\k}{\mathbbm{k}}
\newcommand{\g}{\mathfrak{g}}
\newcommand{\p}{\mathfrak{p}}
\renewcommand{\P}{{\mathcal{P}}}
\newcommand{\ad}{\operatorname{ad}}
\newcommand{\Ker}{\operatorname{Ker}}
\newcommand{\Spec}{\operatorname{Spec}}
\newcommand{\End}{\operatorname{End}}
\newcommand{\Mat}{\operatorname{Mat}}
\newcommand{\Cas}{\operatorname{Cas}}
\newcommand{\Der}{\operatorname{Der}}
\newcommand{\PSpec}{\operatorname{\mathcal{P}-Spec}}
\newcommand{\Frac}{\operatorname{Frac}}
\newcommand{\si}{{\operatorname{sc}}}
\newcommand{\oa}{\overline{a}}
\newcommand{\ox}{\overline{x}}
\newcommand{\olambda}{\overline{\lambda}}
\title[]{\boldmath On the semi-centre of a Poisson algebra}
\author{Cesar Lecoutre \& Lewis Topley}
\email{C.Lecoutre@kent.ac.uk}
\email{L.Topley@kent.ac.uk}
\begin{document}

\maketitle
%\tableofcontents

\begin{abstract}
If $\g$ is a Lie algebra then the semi-centre of the Poisson algebra $S(\g)$ is the subalgebra generated by $\ad(\g)$-eigenvectors.
In this paper we abstract this definition to the context of integral Poisson algebras.
We identify necessary and sufficient conditions for the Poisson semi-centre $A^\si$ to be a Poisson algebra graded by its weight spaces. 
In that situation we show the Poisson semi-centre exhibits many nice properties: the rational Casimirs are quotients of Poisson normal elements and the Poisson Dixmier--M{\oe}glin equivalence holds for $A^\si$. 
%Many directions for future research are suggested.
\end{abstract}

\section{Introduction}

Throughout this paper $\k$ is a field of characteristic zero, all vector spaces are defined over $\k$, and $\g$ will be a Lie algebra.
The symmetric algebra $S(\g)$ carries a natural structure of a Poisson algebra. 
It is easy to see that the subalgebra $S(\g)^\g \subseteq S(\g)$ consisting of elements annihilated by $\ad(\g)$ coincides with the Poisson centre. 
The semi-invariants are, by definition, the common eigenvectors for $\ad(\g)$ and the algebra $S(\g)^\si$ which they generate is known as the Poisson semi-centre. 
This is a Poisson commutative subalgebra of $S(\g)$ graded by the weight space decomposition of $\ad(\g)$.

Over the years the study of semi-centres has motivated a sizable body of research, see \cite{Di,DDV,JS,J6,J7,Ooms} and the references therein. Since this topic arose in the context of invariant theory some of the central questions are the polynomiality and factoriality of semi-centres.
One notable outlet for the study of semi-invariants lies in the computation of the rational invariants of $S(\mathfrak{g})$. 
By the results of Rentschler and Vergne, Dixmier's fourth problem is in fact equivalent to the statement that the centre of $\Frac S(\mathfrak{g})$ is purely transcendental over $\k$, see \cite{RV} and \cite[Probl\`emes]{Di}.
Thanks to \cite{DDV} every rational invariant is a quotient of two elements of $S(\mathfrak{g})^\si$ with the same weight, and so the theory of semi-invariants appears naturally in some important classical problems.

%[Try to motivate the semi-centre as an important object of study: discuss nice algebraic properties it has, mention its connection to the rational
%invariants and motivate them by some open conjectures / know results.]
%Algebraically, the Poisson semi-centre of usually much better behaved than the Poisson centre. For example, although the centre might be trivial, the semi-centre is never trivial. 
%Many important conjectures and results (which ones?) surround the structure of the rational invariants

The purpose of this article is to define and study of the Poisson semi-centre $A^\si$ of an arbitrary integral Poisson algebra $A$,
by which we mean a Poisson algebra which is also an integral domain.
We recall that a Poisson normal element $a\in A$ is such that $\{A,a\} \subseteq Aa$, equivalently the principal ideal $Aa$ is Poisson, and our first observation is that when $A = S(\g)$
the Poisson semi-invariants of $S(\g)$ are precisely the same as Poisson normal elements (Lemma~\ref{pnorm}).
With this in mind we define \emph{the Poisson semi-centre $A^\si$ of $A$} to be the subalgebra generated by the Poisson normal elements. 
In general this subalgebra need not be a Poisson subalgebra (see Example \ref{notP}), and even when it is, it need not be Poisson graded by the weight spaces for the Hamiltonian derivations (see Example \ref{notW}).
To remedy this we begin the paper by identifying a necessary and sufficient condition for $A^\si$ to be a Poisson algebra graded by the Poisson weight space decomposition, as we now explain.

Since $A$ is assumed to be a domain, it is easily shown that for every Poisson normal element $a \in A$ there exists a Poisson derivation $\lambda : A \rightarrow A$ such that 
\[\{b,a\} = \lambda(b) a \quad\text{for all}\quad b \in A\qquad\text{(Lemma~\ref{PoissStr})}.\] %(Lemma~\ref{PoissStr}). 
The additive submonoid of $\Der_\k(A)$ generated by these derivations will be denoted $\Lambda(A)$. 
In Proposition~\ref{AWP} we show that $A^\si$ is a Poisson subalgebra of $A$ graded by the weight space decomposition if and only if $\Lambda(A)$ is an abelian Lie submonoid of $\Der_\k(A)$, and we refer to the latter condition as the {\em abelian weight property}. 
We exhibit several large families of Poisson algebras satisfying this property, including symmetric algebras $S(\g)$ of Lie algebras, Poisson affine spaces, semiclassical limits of various quantised coordinate rings (see \cite{GL}) and the algebras $A(n,a)$ studied by Sierra and the first author in \cite{LS}.
%Lemma~\ref{exampless}

Motivated by the close connection between the semi-centre $S(\g)^\si$ and the centre of the Poisson quotient field $\Frac S(\g)$ (see \cite{DDV}) we investigate the relationship between Poisson ideals, normal elements and the centre of the fraction field of the semi-centre.
Some of our results are gathered together here; see Propositions~\ref{PoissHom} and \ref{RatCas}.
\begin{Proposition}\label{combilemma}
Let $A$ be an integral Poisson algebra with the abelian weight property and such that $A^\si$ is finitely generated. Then the following hold:
\begin{enumerate}
\item[(i)] Every nonzero Poisson ideal of $A^\si$ contains a nonzero Poisson normal element;
\item[(ii)] Every rational Casimir of $A^\si$ is a quotient of two normal elements weighted by the same derivation.
\end{enumerate} 
\end{Proposition}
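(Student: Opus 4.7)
Under the abelian weight property, $A^{\si}$ decomposes as a direct sum of $\Lambda(A)$-weight spaces whose nonzero homogeneous elements are exactly the Poisson normal elements of $A$ lying in $A^{\si}$. Given a nonzero Poisson ideal $I \subseteq A^{\si}$, I would pick $0 \neq z \in I$ and decompose $z = \sum_\chi z_\chi$ into its (finite) weight components. The key calculation is that for any Poisson normal $a \in A^{\si}$ and weight vector $z_\chi$ one has $\{a, z_\chi\} = -\chi(\lambda_a)\, a z_\chi$, and therefore
\[
\{a, z\} + \chi_1(\lambda_a)\, a z \;=\; a \sum_\chi \bigl(\chi_1(\lambda_a) - \chi(\lambda_a)\bigr) z_\chi \;\in\; I,
\]
which annihilates the $\chi_1$-component. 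Fixing a target $\chi_0$ in the support of $z$ and iterating: for each $\chi_1 \neq \chi_0$ in the support, choose a Poisson normal $a \in A$ with $\chi_0(\lambda_a) \neq \chi_1(\lambda_a)$---possible since distinct characters of $\Lambda(A)$ are separated by the generators $\lambda_a$. After finitely many iterations one obtains an element $c \cdot a_1 \cdots a_r \cdot z_{\chi_0} \in I$ with $c \in \k^\times$ and each $a_i$ Poisson normal; this is itself a nonzero Poisson normal element of $A$ as a product of such.

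\textbf{Plan for (ii).} Let $f$ be a rational Casimir of $A^{\si}$ and introduce the ``denominator ideal'' $J := \{x \in A^{\si} : xf \in A^{\si}\}$. This is a nonzero ideal of $A^{\si}$ (since $f \in \Frac A^{\si}$), and I would verify that it is Poisson via the Leibniz calculation: for $x \in J$ and $b \in A^{\si}$,
\[
\{b, xf\} \;=\; \{b, x\}f + x\{b, f\} \;=\; \{b, x\}f
\]
using $\{b, f\} = 0$; since the left-hand side lies in $A^{\si}$ and $f \neq 0$, we obtain $\{b, x\} \in J$. By part~(i), choose a nonzero Poisson normal $q \in J$ and set $p := qf \in A^{\si}$. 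The Casimir property forces $\mu(f) = \{f, a\}/a = 0$ for every $\mu = \lambda_a \in \Lambda(A)$, so $\mu(p) = \mu(q) f + q \mu(f) = \chi_q(\mu)\, p$, showing $p$ is a weight vector with the same character as $q$. The analogous Leibniz computation $\{b, p\} = \lambda_q(b) p + q\{b, f\}$ for $b \in A$ then collapses to $\lambda_q(b) p$, so $p$ is Poisson normal with weight derivation $\lambda_q$, giving the desired $f = p/q$.

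\textbf{Main obstacle.} Part (i) reduces to a clean Vandermonde-style weight-separation argument, the only nontrivial input being that the generators $\lambda_a$ of $\Lambda(A)$ distinguish characters---which is by definition. The more delicate point arises in (ii): the Leibniz identity for $b \in A$ produces the residual $q\{b, f\}$, whose vanishing requires $f$ to Poisson-commute with all of $A$, i.e.\ to be central in $\Frac A$ rather than merely in $\Frac A^{\si}$. I would expect the paper's formalism of ``rational Casimir of $A^{\si}$'' to encode exactly this strengthened Poisson-centrality, consistent with the classical DDV framework cited in the introduction; once that interpretation is in place, both parts follow cleanly from the weight grading and the Leibniz rule.
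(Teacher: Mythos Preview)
Your overall strategy for both parts matches the paper's, but there is a genuine conceptual error in part~(i), and the ``obstacle'' you flag in part~(ii) is an artefact of that same confusion.

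\medskip

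\textbf{Part (i).} You write $\{a,z_\chi\}=-\chi(\lambda_a)\,az_\chi$ and speak of ``characters of $\Lambda(A)$ separated by the generators $\lambda_a$''. This does not parse in the paper's setup: the weights $\chi,\lambda_a\in\Lambda(A)$ are \emph{derivations of $A$}, not linear functionals, so $\chi(\lambda_a)$ is meaningless, and there is no reason for $\{a,z_\chi\}$ to be a scalar multiple of $az_\chi$. The correct identity is simply $\{a,z_\chi\}=\chi(a)\,z_\chi$ with $\chi(a)\in A_{\lambda_a}$ an element of $A$, in general not proportional to $a$. The paper's reduction step is therefore $\mu_1(x_i)\,z-\{x_i,z\}=\sum_\chi(\mu_1(x_i)-\chi(x_i))\,z_\chi$, where the coefficients $\mu_1(x_i)-\chi(x_i)$ lie in $A_{\lambda_i}$; the $\mu_1$-term vanishes, and some other term survives because two distinct derivations in $\Lambda$ must disagree on at least one algebra generator $x_i$ (Leibniz rule). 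Your Vandermonde intuition is right, but it must be run with $A$-valued, not scalar, coefficients.

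\medskip

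\textbf{Part (ii).} Your denominator ideal $J=\{x:xf\in A^\si\}$ is essentially the paper's transporter ideal $(U:A)$ for the Poisson submodule $U=A^\si\cdot f\subseteq\Frac A^\si$, and the argument you sketch is exactly what the paper does. Your ``main obstacle'' dissolves once you work entirely inside $A^\si$: the paper axiomatises $A^\si$ as a \emph{generalised Poisson affine space} and proves the two statements there, so ``normal'' and ``weight'' are intrinsic to $A^\si$, and one only ever needs $\{b,f\}=0$ for $b\in A^\si$. Showing that $p=qf$ is normal in $A^\si$ requires only $\{b,p\}=\lambda_q(b)p$ for $b\in A^\si$, which follows from $f\in\Cas(\Frac A^\si)$; no appeal to centrality in $\Frac A$ is needed.
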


If $A$ is an integral Poisson algebra and $a,b\in A$ are normal elements weighted by the same derivation, then it is easily seen that $ab^{-1}$ lies in the centre of $\Frac A$.
\begin{Question}
Does every element of the centre of $\Frac A$ arise as the quotient of two normal elements?
\end{Question}

When $A$ is a given Poisson algebra, the study of the rational Casimirs
is a challenging problem; this is especially true for Dixmier's
fourth problem in the case of symmetric algebras of Lie algebras (see \cite{Ooms} for a detailed discussion).
One application of such information is to develop our understanding of the Poisson primitive ideals of $\PSpec(A)$ via the Poisson
Dixmier--M{\oe}glin equivalence. Recall that a Poisson prime ideal $I \subseteq A$ is called locally closed if $\{I\}$ is a locally
closed subset of the Poisson spectrum $\PSpec(A)$; $I$ is called Poisson primitive if it is the largest Poisson ideal
contained in some maximal ideal of $A$; finally, $I$ is called rational if the Poisson centre of the quotient field of $A/I$ is algebraic over $\k$.
Thanks to \cite[1.7, 1.10]{Oh} we know that every locally closed ideal is primitive and every primitive ideal is rational. Brown and Gordon asked whether
all three properties might coincide \cite{BGo}, and when they do we say that $A$ satisfies the Poisson Dixmier--M{\oe}glin equivalence.
Using Proposition~\ref{combilemma} we prove the following.
\begin{Theorem}\label{PDMEg}
Let $A$ be an integral Poisson algebra with the abelian weight property and such that $A^\si$ is finitely generated. Then the Poisson Dixmier--M{\oe}glin equivalence holds for $A^\si$.
\end{Theorem}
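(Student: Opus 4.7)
By \cite[1.7, 1.10]{Oh}, locally closed $\Rightarrow$ Poisson primitive $\Rightarrow$ rational holds in any affine commutative Poisson algebra, and $B := A^\si$ is affine by hypothesis, so I focus on the only nontrivial implication: rational $\Rightarrow$ locally closed. Fix a rational Poisson prime $P$ of $B$ and set $\bar B := B/P$, a Noetherian Poisson domain. Write $Z$ for the Poisson centre of $\Frac(\bar B)$, which by rationality of $P$ is algebraic over $\k$. Showing $P$ is locally closed in $\PSpec(B)$ reduces to exhibiting a nonzero element of $\bar B$ lying in every nonzero Poisson prime of $\bar B$: any lift $f \in B \setminus P$ of such an element is then contained in every Poisson prime strictly above $P$, placing $\{P\}$ open in $V(P)$.

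The plan is to descend both parts of Proposition~\ref{combilemma} from $B$ to $\bar B$ using the abelian weight property, and then combine them with the algebraicity of $Z$. A Poisson normal element $c \in B$ of weight $\lambda \in \Lambda(A)$ either goes to zero in $\bar B$ or to a Poisson normal element there with induced weight (the relation $\{b,c\} = \lambda(b)c$ passes through the quotient). Conversely, the weight grading of $B$ furnished by the abelian weight property lets us lift homogeneous pieces; combined with Proposition~\ref{combilemma}(i) applied to $B$, this ensures that every nonzero Poisson ideal of $\bar B$ contains a nonzero Poisson normal element.

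For the rigidity step, a direct Leibniz calculation shows that $\bar c_1/\bar c_2 \in \Frac(\bar B)$ lies in $Z$ whenever $\bar c_1, \bar c_2 \in \bar B$ are Poisson normal of equal weight. An analogue of Proposition~\ref{combilemma}(ii) for $\bar B$ would then give that every element of $Z$ arises this way. Since $Z$ is algebraic over $\k$, each such quotient satisfies a polynomial over $\k$, which in the integral domain $\bar B$ constrains the equal-weight Poisson normal elements to lie in a single scaling class up to units. The finite generation of $B$ forces the weight monoid of $\bar B$ to be finitely generated, so only finitely many Poisson normal elements of $\bar B$ survive up to this equivalence. Their product $\bar f$ is a nonzero Poisson normal element of $\bar B$ which, by the descended version of (i), lies in every nonzero Poisson prime of $\bar B$, yielding the required element.

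The main obstacle is the descent of Proposition~\ref{combilemma}(ii) to $\bar B$: the Poisson centre of $\Frac(\bar B)$ is not a priori inherited from that of $\Frac(B)$, since elements of the latter may involve denominators in $P$, so either an independent proof of the quotient-of-normal-elements description for $\bar B$ or a careful lifting argument through the weight grading is needed. Once that is in place, the rigidity forcing finitely many essentially distinct Poisson normal elements per weight is the remaining technical input; this is where the interplay between algebraicity of $Z$, the weight grading from the abelian weight property, and Noetherianity of $B$ is most delicate.
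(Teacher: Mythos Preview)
Your overall strategy---reducing to the zero ideal being locally closed in $\bar B = B/P$ and then hunting for a single nonzero element contained in every nonzero Poisson prime---matches the paper. The paper also resolves your stated ``main obstacle'' cleanly: rather than \emph{descending} Proposition~\ref{combilemma} through the quotient, it axiomatises the situation by introducing \emph{generalised Poisson affine spaces} (integral Poisson algebras generated by normal elements whose weights commute), shows $A^\si$ is one, and proves this class is closed under prime Poisson quotients (Lemma~\ref{thequotient}). Then Propositions~\ref{PoissHom} and~\ref{RatCas} apply \emph{directly} to $\bar B$, with no lifting needed.

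The genuine gap is in your rigidity step. From algebraicity of $a/b$ over $\k$ you can only conclude that $a,b \in \bar B_\lambda$ are algebraically \emph{dependent} (this is the paper's Corollary~\ref{depn}), not that they lie in a single scaling class up to units; that stronger claim is false unless $\k$ is algebraically closed, which is not assumed. More seriously, ``the weight monoid of $\bar B$ is finitely generated, so only finitely many Poisson normal elements survive up to this equivalence'' is a non-sequitur: a finitely generated monoid such as $\N^n$ has infinitely many elements, so there are infinitely many weights and hence infinitely many inequivalent normal elements. You cannot form the product $\bar f$.

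The paper's fix is sharper and avoids any finiteness count. Given a nonzero Poisson prime $I$ of $\bar B$, take a nonzero homogeneous $a \in I \cap \bar B_\lambda$ (Proposition~\ref{PoissHom}), write $\lambda = \sum m_i \lambda_i$ in terms of the generator weights, and set $b = x_1^{m_1}\cdots x_n^{m_n} \in \bar B_\lambda$. Algebraic dependence (Corollary~\ref{depn}) gives a homogeneous relation $\sum_{i=0}^d s_i a^i b^{d-i} = 0$; isolating the lowest power of $a$ shows some $b^{d-m}$ lies in the ideal generated by $a$, hence in $I$. Primality forces some $x_i \in I$. Thus every nonzero Poisson prime contains one of the finitely many \emph{generators}, and the product $x_1\cdots x_n$ is the element you were looking for.
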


We now describe the structure of this paper. 
In \S \ref{s1} we discuss the definition of the Poisson semi-centre and the abelian weight property, showing that some familiar examples of Poisson algebras satisfy this property.
In \S \ref{s2} we consider a class of finitely generated Poisson algebras axiomatising the algebras $A^\si$ where $A$ is an integral Poisson algebra with the abelian weight property.
We call these Poisson algebras generalised Poisson affine spaces and we prove Proposition~\ref{combilemma} in the context of such algebras, from which we deduce Theorem~\ref{PDMEg}.

\medskip

\noindent {\bf Acknowledgements:} We would like to thank Professor David Jordan and Professor Alfons Ooms for carefully reading the first draft of this manuscript and making helpful suggestions.
The second author is grateful for the support of EPSRC grant EP/N034449/1. 

%We would like to Alfons Ooms for helpful discussion about the theory of semi-invariants for enveloping and symmetric algebras of Lie algebras.

\section{The Poisson semi-centre and the abelian weight property}
\label{s1}

Suppose that $\g$ is a Lie algebra over $\k$. If $\{x_i \mid i \in I\}$ is a basis for $\g$ then the symmetric algebra $S(\g)$ carries a natural structure of a Poisson algebra with bracket:
\begin{eqnarray}\label{e:poissb}
\{f, g\} = \sum_{i,j \in I} \frac{\partial f}{\partial x_i} \frac{\partial g}{\partial x_j} [x_i, x_j] 
\end{eqnarray}
for $f,g\in S(\g)$. The invariants of $S(\g)$ are the elements $S(\g)^\g := \{f\in S(\g) \mid \ad(\g) f = 0\}$ and the semi-invariants are defined to be $$\{f\in S(\g) \mid \ad(\g) f \subseteq \k f\}.$$ An easy calculation using \eqref{e:poissb} shows that the Poisson centre of $S(\g)$ is equal to $S(\g)^\g$.
The algebra which is generated by the set of all semi-invariants is known as the semi-centre $S(\g)^{\si}$, and it has been the focus of much research over the years.
The Poisson normal elements of $S(\g)$ are defined to be the elements $f\in S(\g)$ such that $\{S(\g), f\} \subseteq S(\g) f$.
\begin{Lemma}\label{pnorm}
The Poisson normal elements of $S(\g)$ are precisely the semi-invariants.
\end{Lemma}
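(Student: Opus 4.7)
The plan is to prove the two inclusions separately. For the easy direction, I would show that a semi-invariant $f$ is Poisson normal by exploiting that $\{-,f\}$ is a derivation of $S(\g)$: for any $x \in \g$ the hypothesis gives $\{x,f\} \in \k f$, and then the Leibniz rule applied to a monomial $g = x_{i_1}\cdots x_{i_n}$ yields
\[
\{g,f\} = \Bigl(\sum_{k=1}^n \lambda(x_{i_k})\, x_{i_1}\cdots\widehat{x_{i_k}}\cdots x_{i_n}\Bigr) f \in S(\g)\, f,
\]
where $\lambda(x_{i_k}) \in \k$ is the eigenvalue of $\ad(x_{i_k})$ on $f$. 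Extending additively handles arbitrary $g \in S(\g)$.

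For the converse, suppose $f$ is a nonzero Poisson normal element, so for each $x \in \g$ there exists $h_x \in S(\g)$ with $\{x,f\} = h_x f$. The main point is to force $h_x \in \k$. Here the key observation is that the Poisson bracket with an element of $\g$ preserves the natural grading on $S(\g)$: from the formula \eqref{e:poissb} one sees that $\{x, -\}$ sends homogeneous elements of degree $k$ to homogeneous elements of degree $k$, since $[x,x_i] \in \g$ is of degree $1$ and $\partial/\partial x_i$ drops degree by $1$. Therefore if $f = f_0 + \cdots + f_n$ is the homogeneous decomposition with $f_n \neq 0$, then $\{x,f\}$ has top-degree part $\{x,f_n\}$ of degree at most $n$.

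Writing $h_x = \sum_j h_{x,j}$ with $h_{x,m}$ the top homogeneous component ($m = \deg h_x$), the top-degree part of $h_x f$ equals $h_{x,m} f_n$, which is nonzero of degree $m + n$ since $S(\g)$ is a domain. Comparing with $\{x,f\}$, which has degree at most $n$, forces $m = 0$, hence $h_x \in \k$. This shows $\ad(x) f \in \k f$ for every $x \in \g$, so $f$ is a semi-invariant.

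The only mild obstacle is the bookkeeping for non-homogeneous $f$, but the integrality of $S(\g)$ and the fact that $\ad(x)$ is degree-preserving make this immediate; no deeper input is needed.
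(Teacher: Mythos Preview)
Your proof is correct and follows essentially the same approach as the paper. Both directions rest on the same observation: the degree behaviour of the bracket, stated in the paper as $\{S(\g)_i,S(\g)_j\}\subseteq S(\g)_{i+j-1}$, which for $x\in\g$ amounts precisely to your claim that $\{x,-\}$ preserves the natural grading; you have simply spelled out the resulting comparison of top-degree components more explicitly than the paper does.
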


\begin{proof}
If $a$ is a semi-invariant then using \eqref{e:poissb} we see that $a$ is Poisson normal.
Conversely if $a$ is Poisson normal, then for any $x\in\g$ there exists $\lambda(x) \in A$ such that $\{x,a\}=\lambda(x)a$. 
We deduce that $\lambda(x) \in\k$ from the fact that the Poisson bracket \eqref{e:poissb} satisfies $\{S(\g)_i,S(\g)_j\}\subseteq S(\g)_{i+j-1},$ where $S(\g) = \bigoplus_{i \geq 0} S(\g)_i$
is the grading with $\g$ placed in degree 1.
\end{proof}

The above discussion leads us naturally to:
\begin{Definition}
The semi-centre of a Poisson algebra $A$ is the subalgebra $A^\si$ generated by Poisson normal elements.
\end{Definition}

One nice feature of the semi-centre $S(\g)^\si$ is that it is Poisson commutative. 
However outside the Lie theoretic setting, this fails immediately.
To illustrate what may go wrong we present a couple of examples.
The first one shows that in general $A^\si$ is not necessarily a Poisson subalgebra of $A$.

\begin{Example}
\label{notP}
Let $A=\k[x,y,z]$ with brackets $\{x,y\}=xyz$, $\{x,z\}=x$ and $\{y,z\}=y$. Then $A^\si=\k[x,y]$ is not closed under the Poisson bracket.
\end{Example}

The following example shows that even when $A^\si$ is a Poisson subalgebra it is not always Poisson commutative.

\begin{Example}\label{PAeg}
Let $A = \k[x_1,...,x_n]$ be a polynomial algebra and let $(\lambda_{i,j})_{1\leq i,j \leq n} \in \Mat_n(\k)$ be a skew-symmetric matrix.
Define a Poisson bracket on $A$ by the rule
\begin{eqnarray}\label{PoissAff}
\{x_i, x_j\} = \lambda_{i,j} x_i x_j
\end{eqnarray}
This algebra is known as \emph{Poisson affine space} and, since the generators $x_1,...,x_n$ are Poisson normal we have $A = A^\si$ is not Poisson commutative in general.
The \emph{Poisson torus} $T$ associated to $A$ is the localisation of $A$ at the generators
\[T=\k[x_1^{\pm1},...,x_n^{\pm1}]\]
and the Poisson bracket on $A$ extends uniquely to a Poisson bracket on $T$.
\end{Example}

We proceed to discuss the properties of normal elements. 
Recall that a Poisson derivation $\lambda \in \Der_\P(A)$ is a $\k$-derivation of $A$ which is also a derivation of the Lie bracket $\{\cdot, \cdot\}$ of $A$.
\begin{Lemma}\label{PoissStr}
If $A$ is an integral domain and $a\in A$ is Poisson normal then there exists a Poisson derivation $\lambda \in \Der_\P(A)$ such that
\begin{eqnarray}
\{b, a\} = \lambda(b) a.
\end{eqnarray}
\end{Lemma}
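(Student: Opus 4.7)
The plan is to first construct the candidate $\lambda$ and then verify each of the three properties it must satisfy: $\k$-linearity, the Leibniz rule for the commutative product, and the Leibniz rule for the Poisson bracket.

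First I would observe that the case $a = 0$ is trivial (take $\lambda = 0$), so I may assume $a \neq 0$. Since $a$ is Poisson normal, for every $b \in A$ there is some element $c_b \in A$ with $\{b,a\} = c_b \cdot a$, and since $A$ is an integral domain this $c_b$ is uniquely determined. Setting $\lambda(b) := c_b$ gives a well-defined map $\lambda : A \to A$. The $\k$-linearity of $\lambda$ then follows at once from the $\k$-bilinearity of the Poisson bracket together with the cancellation law in the domain $A$.

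Next I would check the two Leibniz rules. For the commutative product, one computes
\[
\lambda(b_1 b_2)\, a \;=\; \{b_1 b_2, a\} \;=\; b_1 \{b_2, a\} + b_2 \{b_1, a\} \;=\; \bigl(b_1\lambda(b_2) + b_2 \lambda(b_1)\bigr) a,
\]
and cancels $a$. For the Poisson bracket, the Jacobi identity gives
\[
\{\{b_1,b_2\},a\} \;=\; \{b_1,\{b_2,a\}\} - \{b_2,\{b_1,a\}\}.
\]
Substituting $\{b_i,a\} = \lambda(b_i) a$ on the right and expanding each outer bracket by the Leibniz rule produces two terms involving $\lambda(b_i)\lambda(b_j) a$ which cancel, leaving
\[
\lambda(\{b_1,b_2\})\, a \;=\; \bigl(\{b_1,\lambda(b_2)\} + \{\lambda(b_1),b_2\}\bigr)\, a,
\]
and cancelling $a$ yields the desired identity. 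There is no real obstacle here; the only subtlety is to remember that cancellation by $a$ requires the domain hypothesis on $A$, which is precisely why this statement is formulated in the integral setting.
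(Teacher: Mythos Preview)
Your proof is correct and follows essentially the same route as the paper's own argument: define $\lambda$ via $\{b,a\} = \lambda(b)a$, then verify the Leibniz rule for the product and for the bracket by direct computation, using the integral domain hypothesis to cancel $a$ at the end. The only cosmetic differences are that you treat the $a=0$ case separately and phrase the Jacobi identity in the form $\{\{b_1,b_2\},a\} = \{b_1,\{b_2,a\}\} - \{b_2,\{b_1,a\}\}$ rather than $\{\{b,c\},a\} = \{\{b,a\},c\} + \{b,\{c,a\}\}$, but the substance is identical.
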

\begin{proof}
Since $a$ is normal we have $\{b, a\} = \lambda(b) a$ for some linear map $\lambda : A \rightarrow A.$ We must check that
$\lambda$ is a Poisson derivation. For $b,c \in A$ we have
\begin{eqnarray*}
\lambda(bc)a = \{bc, a\} = \{b, a\} c + b\{c,a\} = (\lambda(b)c + b\lambda(c))a,
\end{eqnarray*}
and
\begin{eqnarray*}
\lambda(\{b,c\})a
&=& \{\{b,c\}, a\} = \{\{b, a\}, c\} + \{b, \{c, a\}\\ &=& \{\lambda(b)a, c\} + \{b,\lambda(c)a\}\\
&=& \{\lambda(b), c\} a + \lambda(b) \{a, c\} + \{b, \lambda(c)\}a + \lambda(c)\{b, a\}\\
&=& (\{\lambda(b), c\} + \{b, \lambda(c)\})a.
\end{eqnarray*}
Since $A$ is integral we conclude that 
\begin{eqnarray*}
\lambda(bc)=\lambda(b)c + b\lambda(c); \text{ and} \\
\lambda(\{b,c\}) = \{\lambda(b), c\} + \{b, \lambda(c)\}
\end{eqnarray*}
as required.
\end{proof}

From henceforth we assume that $A$ is an integral Poisson algebra.
For any $\lambda \in \Der_\P(A)$ we make the notation
$$A_\lambda := \{a \in A \mid \{b, a\} = \lambda(b) a \text{ for all } b\in A\}$$
and write $$\Lambda(A) := \{\lambda \in \Der_\P(A) \mid A_\lambda \neq 0\}.$$
Since $\{A,\k\} = 0$ we have $0 \in \Lambda(A)$ and when $a \in A_\lambda$ and $b\in A_\mu$
we have $ab \in A_{\lambda + \mu}$ by the Jacobi identity, so that $\Lambda(A)$ is a commutative
submonoid of $\Der_\P(A)$. This leads to an alternative description of the semi-centre
\begin{eqnarray}\label{thegrading}
A^\si = \bigoplus_{\lambda \in \Lambda(A)} A_\lambda.
\end{eqnarray}
The derivations $\lambda \in \Lambda(A)$ will be referred to as \emph{the weights of $A$}, whilst the
subspaces $A_\lambda$ will be called \emph{the weight spaces}.

Although the formula \eqref{thegrading} defines a grading on $A^\si$ as an associative subalgebra of $A$, it does not in general define a Poisson grading (see Example \ref{notW}).
In this paper we are interested in the case where $A^\si$ is a Poisson subalgebra
which is Poisson graded by \eqref{thegrading}, ie. $\{A_\lambda, A_\mu\} \subseteq A_{\lambda + \mu}$ for $\lambda, \mu \in \Lambda$.
The following translates these properties into statements about $\Lambda$.
\begin{Proposition}\label{AWP}
Let $A$ be an integral Poisson algebra. Then the following are equivalent:
\begin{enumerate}
\item[(i)] $[\Lambda, \Lambda] = 0$;
\item[(ii)] $A^\si$ is a Poisson subalgebra of $A$ and \eqref{thegrading} is a Poisson grading.
\end{enumerate}
If {\rm(i)} or {\rm(ii)} holds then we say that $A$ has \emph{the abelian weight property}. Furthermore $\lambda(A_{\mu})\subseteq A_{\mu}$ for all $\lambda,\mu\in\Lambda$.
\end{Proposition}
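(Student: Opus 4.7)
The plan is to derive a single master identity and read off both the equivalence and the final assertion from it. Given $a\in A_\lambda$, $b\in A_\mu$ and any $c\in A$, I would apply the Jacobi identity
\[
\{c,\{a,b\}\} \;=\; \{\{c,a\},b\} + \{a,\{c,b\}\},
\]
substitute the defining weight relations $\{c,a\}=\lambda(c)a$ and $\{c,b\}=\mu(c)b$, and then expand each term by the Leibniz rule (using $\{\lambda(c),b\}=\mu(\lambda(c))b$ since $b\in A_\mu$, and $\{a,\mu(c)\}=-\lambda(\mu(c))a$ since $a\in A_\lambda$). A routine collection of terms yields
\[
\{c,\{a,b\}\} \;=\; (\lambda+\mu)(c)\,\{a,b\} \;-\; [\lambda,\mu](c)\,ab.
\]

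For (i) $\Rightarrow$ (ii), vanishing of $[\lambda,\mu]$ reduces this to $\{c,\{a,b\}\}=(\lambda+\mu)(c)\{a,b\}$, so $\{A_\lambda,A_\mu\}\subseteq A_{\lambda+\mu}$; hence $A^\si$ is closed under the bracket and \eqref{thegrading} is a Poisson grading. For the converse (ii) $\Rightarrow$ (i), pick nonzero $a\in A_\lambda$ and $b\in A_\mu$ (which exist by the very definition of $\Lambda$). If the grading is Poisson then $\{a,b\}\in A_{\lambda+\mu}$, so $\{c,\{a,b\}\}=(\lambda+\mu)(c)\{a,b\}$; subtracting this from the master identity forces $[\lambda,\mu](c)\,ab=0$ for every $c\in A$. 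Since $A$ is a domain and $ab\neq 0$, we conclude $[\lambda,\mu]=0$.

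For the final assertion, fix $\lambda,\mu\in\Lambda$ and $a\in A_\mu$. Because $\lambda$ is a Poisson derivation, applying $\lambda$ to the identity $\{c,a\}=\mu(c)a$ gives
\[
\{\lambda(c),a\} + \{c,\lambda(a)\} \;=\; \lambda(\mu(c))\,a + \mu(c)\,\lambda(a),
\]
and substituting $\{\lambda(c),a\}=\mu(\lambda(c))a$ yields
\[
\{c,\lambda(a)\} \;=\; \mu(c)\,\lambda(a) + [\lambda,\mu](c)\,a.
\]
Under (i) the commutator term vanishes, so $\lambda(a)\in A_\mu$. The only genuine use of the integral-domain hypothesis is the cancellation of $ab$ in (ii) $\Rightarrow$ (i); beyond that the main bookkeeping obstacle is tracking signs when using skew-symmetry to place elements of $A_\lambda$ or $A_\mu$ in the correct slot before applying Leibniz.
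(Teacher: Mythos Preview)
Your argument is correct. The equivalence $(\mathrm{i})\Leftrightarrow(\mathrm{ii})$ proceeds exactly as in the paper: both derive the same master identity $\{c,\{a,b\}\}=(\lambda+\mu)(c)\{a,b\}-[\lambda,\mu](c)\,ab$ via Jacobi and Leibniz, and read off both implications, using integrality to cancel $ab$ in $(\mathrm{ii})\Rightarrow(\mathrm{i})$.

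For the final assertion $\lambda(A_\mu)\subseteq A_\mu$, however, you take a genuinely different route. The paper fixes a nonzero $a\in A_\lambda$, writes $\lambda(b)a=\{b,a\}$, and computes $\{c,\lambda(b)\}a$ by the Leibniz rule and the already-established Poisson grading, arriving at $\{c,\lambda(b)\}a=\mu(c)\lambda(b)a$; integrality of $A$ is then invoked a second time to cancel $a$. You instead exploit directly that $\lambda$ is a Poisson derivation (Lemma~\ref{PoissStr}): applying $\lambda$ to the defining relation $\{c,a\}=\mu(c)a$ and rearranging yields $\{c,\lambda(a)\}=\mu(c)\lambda(a)+[\lambda,\mu](c)a$, and the commutator term vanishes under $(\mathrm{i})$. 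Your approach is slightly more economical --- it makes your closing remark that integrality is used \emph{only} in $(\mathrm{ii})\Rightarrow(\mathrm{i})$ literally true, whereas the paper's argument needs the domain hypothesis twice.
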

\begin{proof}
Let $\lambda, \mu \in \Lambda$, pick $x \in A_\lambda, y\in A_\mu$ and $a\in A$. We have
\begin{eqnarray*}
\{a, \{x, y\}\} &=& \{\{a, x\}, y\} + \{x, \{a, y\}\} = \{\lambda(a) x, y\} + \{x, \mu(a) y\} \\
&=& \{\lambda(a), y\} x + \lambda(a) \{x, y\} + \{x, \mu(a)\} y + \mu(a)\{x,y\}\\ &=&  [\mu, \lambda](a) xy + (\mu + \lambda)(a)\{x, y\}.
%&=& (\mu + \lambda)(a)\{x, y\}
\end{eqnarray*}
If $[\Lambda, \Lambda] = 0$ then $\{A_\lambda,A_\mu\}\subseteq A_{\lambda+\mu}$ and $A^\si$ is a Poisson algebra graded by its weight spaces \eqref{thegrading}.
The converse is clear from the equality
\[0=\{a, \{x, y\}\}-(\mu + \lambda)(a)\{x, y\}=[\mu, \lambda](a) xy\]
since $A$ is integral.

Let $a\in A_\lambda$ be nonzero, and let $b\in A_\mu, c\in A$. Then
\begin{eqnarray*}
\{c, \lambda(b)\}a &=& \{c, \lambda(b)a\}-\{c,a\}\lambda(b) \\
& = & \{c, \{b,a\}\}-\lambda(c)\lambda(b)a\\
& = & (\lambda+\mu)(c)\{b,a\}-\lambda(c)\lambda(b)a\\
& = & (\lambda+\mu)(c)\lambda(b)a-\lambda(c)\lambda(b)a= \mu(c)\lambda(b)a.
\end{eqnarray*}
Since $A$ is an integral domain it follows that $\lambda(b) \in A_\mu$ and so $\lambda$ preserves the weight spaces for all
$\lambda \in \Lambda$.
\end{proof}
\begin{Remark}
When $A$ has the abelian weight property, the Poisson normal elements of $A^\si$ are precisely the elements homogeneous
with respect to the grading $A^\si = \bigoplus_\lambda A_\lambda$. We shall use these two names interchangeably for such
elements. We also point out that homogeneous elements of degree zero are the same as Poisson central elements.
\end{Remark}

The abelian weight property is reasonably natural as the next result illustrates.
\begin{Proposition}\label{exampless}
Let $A$ be a Poisson algebra and $S$ a multiplicative set in $A$. If $AS^{-1}$ has the abelian weight property then so does $A$. 
Furthermore the following families of Poisson algebras satisfy the abelian weight property:
\begin{enumerate}
\item[(i)] the symmetric algebras of Lie algebras;
\item[(ii)] Poisson affine spaces and Poisson tori, described in Example~\ref{PAeg};
\item[(iii)] semiclassical limits of  the following quantum algebras defined in \cite[Sections 2.3-2.7]{GL}:
\begin{itemize}
\item quantum matrices;
\item quantum symplectic spaces;
\item quantum euclidean spaces;
\item quantised Weyl algebras;
\item quantum (anti-)symmetric matrices;
\end{itemize}
\item [(iv)] the Poisson algebras $A(n,a)$ from \cite{LS}.
\end{enumerate}
\end{Proposition}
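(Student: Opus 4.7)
For the \emph{localisation claim}, the plan is to use the universal extension of $\k$-derivations to $AS^{-1}$ via the quotient rule $\tilde d(a/s) = (s\,d(a) - a\,d(s))/s^2$. A short check shows the assignment $d \mapsto \tilde d$ preserves the Poisson-derivation property and gives a Lie algebra embedding $\Der_\P(A) \hookrightarrow \Der_\P(AS^{-1})$. Next, if $a \in A_\lambda$ then $a$ remains Poisson normal in $AS^{-1}$ with weight $\tilde\lambda$, so restriction defines a monoid embedding $\Lambda(A) \hookrightarrow \Lambda(AS^{-1})$ that respects the Lie bracket. Commutativity of the target therefore forces commutativity of the source, and Proposition~\ref{AWP} delivers the abelian weight property for $A$.

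For \emph{cases (i) and (ii)}, direct verification suffices. By the proof of Lemma~\ref{pnorm}, every $\lambda \in \Lambda(S(\g))$ is a Poisson derivation sending $\g$ into $\k$. For any two such $\lambda, \mu$ and any $x \in \g$ we have $[\lambda,\mu](x) = \lambda(\mu(x)) - \mu(\lambda(x)) = 0$ because derivations annihilate $\k$, and the Leibniz rule extends this to $[\lambda,\mu] = 0$ on all of $S(\g)$. For a Poisson affine space, the generators $x_1,\ldots,x_n$ are Poisson normal with weights $\lambda_i$ determined by $\lambda_i(x_j) = -\lambda_{ij}\,x_j$, so each $\lambda_i$ is scalar-diagonal in the monomial basis; the $\lambda_i$ pairwise commute, and since $\Lambda$ is generated as a monoid by these, $[\Lambda,\Lambda] = 0$. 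The same diagonal-derivation computation handles the Poisson torus.

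For \emph{cases (iii) and (iv)}, I would treat each family separately using the explicit presentations in \cite[\S 2.3--2.7]{GL} and \cite{LS}. In each instance the algebra is an iterated Poisson--Ore extension admitting a natural sequence of Poisson normal generators whose associated weights act by rescaling the chosen generating set; the resulting $\lambda_i$ are diagonal and pairwise commuting by direct calculation. Alternatively, and often more uniformly, one exhibits a multiplicative set of Poisson normal elements whose localisation is a Poisson torus and appeals to the localisation claim combined with case (ii). The main obstacle is the case-by-case nature of (iii): the brackets in \cite[\S 2.3--2.7]{GL} for quantum matrices, the (anti-)symmetric variants, the symplectic and euclidean spaces, and the quantised Weyl algebras are combinatorially distinct, so isolating the Poisson normal sequence or identifying the torus localisation is a separate, though routine, computation for each family.
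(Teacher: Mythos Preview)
Your treatment of the localisation claim and of cases (i) and (ii) is essentially the paper's argument.

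For (iii) and (iv), however, your primary approach has a genuine error. You assert that each algebra ``admits a natural sequence of Poisson normal generators whose associated weights act by rescaling the chosen generating set'', but this is false. Already in the semiclassical limit of $2\times 2$ quantum matrices, with generators $a,b,c,d$ and bracket $\{a,d\}=2bc$, the element $a$ is not Poisson normal; the same phenomenon occurs throughout the list in (iii). So the direct diagonal-weight computation you propose never gets started. Your \emph{alternative} for (iii)---localising to a Poisson torus---is exactly what the paper does, but the paper avoids the case-by-case work you anticipate by invoking a single uniform result: all of these algebras are Poisson iterated Ore extensions to which the Poisson deleting derivations algorithm of \cite{LL} applies, and \cite[Theorem~5.3.2]{L} then guarantees a localisation to a Poisson torus.

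For (iv) the torus route does not apply either: by \cite[Lemma~3.26]{LS} one has a localisation $A(n,a)^\circ\cong\k[Y_0^{\pm1},X,Y_2,\dots,Y_n]$ with $\{X,Y_i\}=(a+i)Y_0Y_i$, which is not of Poisson-affine or torus type (the coefficient is $Y_0$, not a scalar). The paper instead computes the semi-centre and weight monoid of $A^\circ$ directly, observing that every weight lies in the abelian span of $Y_0\partial_X$.
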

\begin{proof}
There is a Lie algebra embedding form $\Der_\P(A)$ into $\Der_\P(AS^{-1})$ extending derivations via the Leibniz rule. 
If $a$ is normal in $A$ with weight $\lambda$ then the following computation shows it is also normal in $AS^{-1}$,
and that the weight is the image of $\lambda$ in $\Der_\P(AS^{-1})$
\[\{bs^{-1},a\}=-bs^{-2}\{s,a\}+s^{-1}\{b,a\}=\big(\lambda(b)s^{-1}-bs^{-2}\lambda(s)\big)a.\]
Thus $\Lambda(A) \hookrightarrow \Lambda(AS^{-1})$ as abelian groups, which proves the first claim.

We now verify that the examples listed in the proposition satisfy the abelian weight property:

(i) Let $\g$ be a Lie algebra. Since the maps $\lambda \in \Lambda(S(\g))$ are derivations and $S(\g)$ is generated by $\g$
it suffices to show that $[\lambda, \mu](\g) = \{0\}$ for all $\lambda, \mu \in \Lambda(S(\g))$. By Lemma~\ref{pnorm} the Poisson normal elements of
$S(\g)$ are actually semi-invariants and so $\lambda, \mu$ send $\g \rightarrow \k$. %Since they are also derivations they map $\k$ to zero.
It follows that $\lambda \circ \mu (\g) = \mu \circ \lambda(\g) = \{0\}$ and as a result $[\lambda, \mu](\g) = \{0\}$.

(ii) Now let $A = \k[x_1,...,x_n]$. For $i = 1,...,n$ we let $x_i \in A_{\lambda_i}$ for $\lambda_1,...,\lambda_n \in \Lambda(A)$ and write
$\partial_i := \frac{\partial}{\partial x_i}$. It follows from (\ref{PoissAff}) that $\lambda_j = \sum_{i=1}^n \lambda_{i,j} x_i \partial_i$
for $\lambda_{i,j} \in \k$.
Since the derivations $\{x_i \partial_i \mid i=1,...,n\}$ pairwise commute it follows immediately that the same is true for $\lambda_1,...,\lambda_n$.
The monoid of weights of the torus $\k[x_1^{\pm1},...,x_n^{\pm1}]$ is generated by the weights $\{\pm\lambda_i \mid i=1,...,n\}$ and so is abelian.

(iii) These Poisson algebras are all Poisson iterated Ore extensions to which the Poisson deleting derivations algorithm \cite{LL} can be applied, and therefore they localise to Poisson tori \cite[Theorem 5.3.2]{L}. The result then follows from the first claim along with part (ii).

(iv) Fix $n\geq1$ and $a\in\k$. By \cite[Lemma 3.26]{LS} the Poisson algebra $A=A(n,a)$ has a localisation $A^\circ$ which is isomorphic to the Poisson algebra $\k[Y_{0}^{\pm1},X,Y_2,...,Y_n]$ with nonzero Poisson brackets
\[\{X,Y_i\}=(a+i)Y_0Y_i.\]
It is straightforward to see that its semi-centre is $\k[Y_{0}^{\pm1},Y_2,\dots,Y_n]$ and that the monoid of weights is generated by the commuting set $\{\pm aY_0^{\pm1}\partial_{X},(a+i)Y_0\partial_{X} \mid i=2,...,n\}$.
\end{proof}

Despite holding for the families described in the proposition, the next example shows that the abelian weight property does not hold for every integral Poisson algebra.

\begin{Example}
\label{notW}
Every Poisson bracket on $A = \k[x,y]$ is determined by a choice of $\{x,y\}$ thanks to the derivation rule and skew-symmetry. 
Furthermore, every possible choice actually defines a Poisson bracket. 
If we define $\{x,y\} = pxy$ for some $p \in A$ then both $x$ and $y$ are normal and so the resulting Poisson structure satisfies $A^\si =A$. 
The weights of $x$ and $y$ are respectively $\lambda_x=-py\partial_{y}$ and  $\lambda_y=px\partial_{x}$.
Since $[\lambda_x,\lambda_y](x)=-\lambda_x(p)x$ and $[\lambda_x,\lambda_y](y)=\lambda_y(p)y$ it follows that $A$ has the abelian weight property if and only if $p\in\k$.
\end{Example}

\section{Generalised Poisson affine space and the Poisson Dixmier--M{\oe}glin Equivelence}
\label{s2}

In this section we investigate algebraic and geometric properties of $\PSpec A^\si$ and so we restrict ourselves to the case where the semi-centre is finitely generated. 
We remark that this is not always the case, as shown in \cite[Section 5]{DDV}.
Our results focus on the case where the semi-centre is a Poisson algebra graded by its weight space decomposition.
%As we mentioned previously, the purpose of this paper is to investigate properties of the semi-centre in the case where $A^\si$ is a
%\begin{enumerate}
%\item Poisson subalgebra;
%\item Poisson graded by its weight space decomposition;
%\item finitely generated as a $\k$-algebra.
%\end{enumerate}
In view of Proposition \ref{AWP} the most appropriate way to discuss such algebras seems to be via the following axiomatisation.
\begin{Definition}
We say that a Poisson algebra $A$ is a \emph{generalised Poisson affine space} if
\begin{itemize}
\item[(i)]{$A$ is an integral domain generated over $\k$ by Poisson normal elements $x_1,...,x_n$ with weights $\lambda_1,...,\lambda_n$;}
\item[(ii)]{the weights pairwise commute.}
\end{itemize} 
\end{Definition}
The next lemma shows that the axioms of a generalised Poisson affine space are passed down to all prime Poisson quotients, making them amenable to inductive arguments.
\begin{Lemma}\label{thequotient}
When $A$ is a generalised Poisson affine space and $I$ is a prime Poisson ideal, $A/I$ is a generalised Poisson affine space.
\end{Lemma}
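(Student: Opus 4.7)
The plan is to verify the two defining axioms of a generalised Poisson affine space for $A/I$ directly. Since $I$ is prime and Poisson, $A/I$ is automatically an integral domain carrying an induced Poisson structure, and the images $\bar{x}_i := x_i + I$ generate $A/I$ as a $\k$-algebra. After discarding those $x_i$ that happen to lie in $I$, I may relabel so that $\bar{x}_1, \ldots, \bar{x}_m$ are the nonzero generators.

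The crux is showing that for each $1 \leq j \leq m$ the weight $\lambda_j$ descends to a Poisson derivation $\bar{\lambda}_j$ of $A/I$, for then the identity $\{b, x_j\} = \lambda_j(b) x_j$ in $A$ passes through the quotient to $\{\bar{b}, \bar{x}_j\} = \bar{\lambda}_j(\bar{b})\, \bar{x}_j$, proving $\bar{x}_j$ is Poisson normal in $A/I$ with weight $\bar{\lambda}_j$. To get $\bar{\lambda}_j$, I need to check $\lambda_j(I) \subseteq I$. For $a \in I$ the Poisson ideal condition yields $\lambda_j(a)\, x_j = \{a, x_j\} \in I$, and since $I$ is prime and $x_j \notin I$ we conclude $\lambda_j(a) \in I$, as required. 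This step, which upgrades the mere Poisson ideal condition $\{A, I\} \subseteq I$ to stability of $I$ under the weights, is the only place where primality (rather than just being a Poisson ideal) is used, and it is the one point in the argument that is not completely formal.

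Once each $\bar{\lambda}_j$ is defined on $A/I$, it is automatically a Poisson derivation because $\lambda_j$ was one on $A$ and the quotient map is a Poisson homomorphism. Finally, axiom (ii) is inherited for free: the relation $[\bar{\lambda}_i, \bar{\lambda}_j] = 0$ on $A/I$ is the image of $[\lambda_i, \lambda_j] = 0$ on $A$ under the quotient. This completes the verification that $A/I$ satisfies both axioms and is therefore a generalised Poisson affine space, with generators $\bar{x}_1, \ldots, \bar{x}_m$ and commuting weights $\bar{\lambda}_1, \ldots, \bar{\lambda}_m$.
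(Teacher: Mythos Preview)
Your proof is correct and follows essentially the same route as the paper: relabel so that $x_1,\ldots,x_m$ are the generators not in $I$, use primality of $I$ together with $\lambda_j(a)x_j=\{a,x_j\}\in I$ to conclude $\lambda_j(I)\subseteq I$, and then observe that the induced derivations $\bar\lambda_j$ on $A/I$ inherit pairwise commutativity from the $\lambda_j$. The only cosmetic difference is that the paper first notes the $\bar x_j$ are normal and invokes Lemma~\ref{PoissStr} to produce their weights before identifying these with the induced $\bar\lambda_j$, whereas you construct $\bar\lambda_j$ directly and then read off normality.
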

\begin{proof}
Let $x_1,...,x_n$ be Poisson normal generators of $A$. We can suppose that there is $1\leq m \leq n$ such that $\{x_1,...,x_m\} \cap I = \emptyset$ and $\{x_{m+1},...,x_n\} \subseteq I$.
Then the images $\ox_1,...,\ox_m$ in $A/I$ are normal generators in $A/I$. Since the latter is an integral domain Lemma~\ref{PoissStr}
tells us that there are derivations $\olambda_1,...,\olambda_m$ of $A/I$ such that $\{\oa, \ox_i\} = \olambda_i(\oa) \ox_i$ for all
$i=1,...,m$. For all $a\in I$ we have $\{a, x_i\} = \lambda_i(a)x_i \in I$ and since $I$ is prime and $x_i \notin I$ we deduce that $\lambda_i(a) \in I$.
In other words, $\lambda_i(I) \subseteq I$ and the map $\olambda_i$ is just the map induced by $\lambda_i$ on the quotient $A/I$.
Finally, since $\{\lambda_i \mid i=1,...,m\}$ pairwise commute we may conclude that the same is true for $\{\olambda_i \mid i=1,...,m\}$.
\end{proof}
\begin{Example}
The Poisson affine space of Example~\ref{PAeg} is a generalised Poisson affine space, thanks to Lemma~\ref{exampless}. Further examples can be obtained by:
\begin{enumerate}
\item[(i)] forming a Poisson affine space over some ground ring $K$, which is a finitely generated commutative $\k$-algebra;
\item[(ii)] taking a prime Poisson quotient of any generalised Poisson affine space.
\end{enumerate}
In fact when all of the normal generators $x_1,...,x_n$ of a generalised affine space $A$ are prime, it is easy to
show that the Poisson brackets all have the form $\{x_i, x_j\} = \lambda_{i,j} x_i x_j$ where $\lambda_{i,j} \in \Cas(A)$ is a Casimir
for $1\leq i,j \leq n$. In this case the fibres of the map $\Spec A \rightarrow \Spec \Cas(A)$ are Poisson affine spaces.
It is interesting to wonder whether this conclusion holds when the normal generators are not necessarily prime.
\end{Example}

In order to prove Theorem~\ref{PDMEg} we actually prove the following result, which is equivalent.
\begin{Theorem}\label{PDME}
When $A$ is a generalised Poisson affine space the Poisson Dixmier--M{\oe}glin equivalence holds for $A$.
\end{Theorem}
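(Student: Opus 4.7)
The plan is to prove the only nontrivial implication, namely that every rational Poisson prime of $A$ is locally closed; the chain locally closed $\Rightarrow$ primitive $\Rightarrow$ rational is already given by \cite[1.7, 1.10]{Oh}. I would first reduce to the case of the zero ideal: if $I \in \PSpec(A)$ is rational then Lemma~\ref{thequotient} ensures $A/I$ is again a generalised Poisson affine space, and the rationality of $I$ says exactly that $\Cas(\Frac(A/I))$ is algebraic over $\k$. Since the homeomorphism $V(I) \cong \PSpec(A/I)$ identifies $\{I\}$ with $\{(0)\}$, it suffices to prove the following: \emph{in a generalised Poisson affine space $B$ with $\Cas(\Frac B)$ algebraic over $\k$, there exists a nonzero element lying in every nonzero Poisson prime.}

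The natural candidate is the product $a := x_1 \cdots x_n$ of the normal generators. Let $S$ be the multiplicative set they generate; if a Poisson prime $P$ avoids $a$ then $P$ contains no $x_i$, so $P \cap S = \emptyset$ and $P$ extends to a proper nonzero Poisson prime of the localisation $B_S$. It therefore suffices to show $\PSpec(B_S) = \{(0)\}$. The key observation is that $B_S$ is itself a generalised Poisson affine space, since it is generated by the normal elements $x_1^{\pm 1},\ldots,x_n^{\pm 1}$ whose weights $\pm\lambda_1,\ldots,\pm\lambda_n$ still pairwise commute. Hence Proposition~\ref{combilemma}(i) applies to $B_S$: every nonzero Poisson ideal of $B_S$ contains a nonzero Poisson normal element $f$, which by the abelian weight property is homogeneous of some weight $\lambda \in \Lambda(B_S)$.

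Finally, I would upgrade $f$ to a unit. Since $\Lambda(B_S)$ is the abelian group generated by $\lambda_1,\ldots,\lambda_n$, the weight $\lambda$ is realised by a Laurent monomial $x^\alpha := x_1^{\alpha_1}\cdots x_n^{\alpha_n} \in B_S$; the ratio $c := f/x^\alpha$ is then a weight-zero element of $\Frac(B)$, and therefore a Casimir of $\Frac(B)$. By rationality $c$ is algebraic over $\k$, and since $c \neq 0$ its minimal polynomial has nonzero constant term, so $c^{-1} \in \k[c] \subseteq B_S$; consequently $f = c \cdot x^\alpha$ is a unit in $B_S$. Thus every nonzero Poisson ideal of $B_S$ contains a unit, forcing $\PSpec(B_S) = \{(0)\}$ and completing the proof. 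I expect the main obstacle to be checking that $B_S$ genuinely inherits the generalised Poisson affine space structure (so that the finiteness hypothesis of Proposition~\ref{combilemma} persists after localising) and that the Casimir $c$ really lands inside $B_S$ rather than merely inside $\Frac B$; once this set-up is secured, the rationality hypothesis feeds in transparently at the very last step to promote the normal element $f$ to a unit.
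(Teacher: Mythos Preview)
Your argument is correct, and the reduction to the zero ideal is identical to the paper's. The two worries you flag at the end are not serious: $B_S$ is generated as a $\k$-algebra by the $2n$ Poisson normal elements $x_1^{\pm 1},\dots,x_n^{\pm 1}$ with pairwise commuting weights $\pm\lambda_1,\dots,\pm\lambda_n$, so it is again a generalised Poisson affine space and Proposition~\ref{PoissHom} applies; and since $x^\alpha$ is a unit of $B_S$, the Casimir $c = f x^{-\alpha}$ visibly lies in $B_S$, so the minimal-polynomial trick goes through.

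Where you diverge from the paper is in how rationality is cashed out. The paper does \emph{not} localise. It stays in $A$, takes the homogeneous element $a \in I \cap A_\lambda$ supplied by Proposition~\ref{PoissHom}, and then invokes Corollary~\ref{depn} (itself a consequence of Proposition~\ref{RatCas}) to get an algebraic dependence between $a$ and the monomial $b = x_1^{m_1}\cdots x_n^{m_n}$ of the same weight. Manipulating the resulting homogeneous relation $\sum_i s_i a^i b^{d-i}=0$ and using that $A$ is a domain, one extracts a power of $b$ in $I$, hence some $x_i \in I$. Your route sidesteps Corollary~\ref{depn} entirely: once you pass to $B_S$ and make the monomial invertible, the ratio $f/x^\alpha$ is a Casimir living inside the ring, and rationality immediately promotes it to a unit. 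The two arguments are equivalent in content (both use only the easy direction of Proposition~\ref{RatCas}, that quotients of equal-weight normal elements are Casimirs), but your packaging is a bit more streamlined and avoids the explicit polynomial manipulation.
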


\emph{For the rest of the section we assume $A$ is a generalised Poisson affine space.}
We let $x_1,...,x_n$ be the normal generators with weights $\lambda_1,...,\lambda_n$
and we let $\Lambda$ be the monoid consisting of the weights of normal elements.
Recall that, by Lemma~\ref{AWP} the decomposition $A = \bigoplus_{\lambda \in \Lambda} A_\lambda$ is a Poisson grading.
\begin{Lemma}\label{derivationlemma}
Let $\Lambda_0 \subseteq \Lambda$ be any collection of derivations and suppose that for all $i=1,...,n$ we have $\hash \{\lambda(x_i) \mid \lambda \in \Lambda_0\} = 1$.
Then $\hash \Lambda_0 = 1$.
\end{Lemma}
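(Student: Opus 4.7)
The plan is to exploit the fact that every $\lambda \in \Lambda$ is, by Lemma~\ref{PoissStr}, a $\k$-derivation of the commutative algebra $A$, together with the assumption that $A$ is generated as a $\k$-algebra by the Poisson normal elements $x_1, \ldots, x_n$.

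First I would observe that if the set $\{\lambda(x_i) \mid \lambda \in \Lambda_0\}$ is to have cardinality exactly $1$ for every $i$, then in particular $\Lambda_0$ must be nonempty, so $\#\Lambda_0 \geq 1$.

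For the reverse inequality, take any two elements $\lambda, \mu \in \Lambda_0$. By hypothesis $\lambda(x_i) = \mu(x_i)$ for each $i = 1, \ldots, n$, so the difference $\lambda - \mu$ is a $\k$-linear derivation of $A$ vanishing on the generating set $\{x_1, \ldots, x_n\}$. Since any $\k$-derivation of a commutative $\k$-algebra is determined by its values on a generating set via the Leibniz rule, we conclude $\lambda - \mu = 0$, i.e., $\lambda = \mu$. Hence $\#\Lambda_0 \leq 1$, and combining with the previous inequality gives $\#\Lambda_0 = 1$.

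There isn't really a main obstacle here; the lemma is essentially a reformulation of the universal property of derivations on a finitely generated commutative algebra, once one recalls from Lemma~\ref{PoissStr} that the weights in $\Lambda$ are genuine $\k$-derivations of $A$ (not merely formal symbols). The role of the lemma in the paper is presumably to let one conclude that a finite-dimensional subspace of $\Lambda$-values on the $x_i$ controls the cardinality of a subset of $\Lambda$, which should be the substantive use downstream.
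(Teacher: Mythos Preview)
Your proof is correct and follows essentially the same approach as the paper: take two elements of $\Lambda_0$, note their difference is a derivation vanishing on the generators $x_1,\ldots,x_n$, and conclude via the Leibniz rule that it vanishes identically. Your additional remark that $\#\Lambda_0 \geq 1$ (since the sets $\{\lambda(x_i)\mid \lambda\in\Lambda_0\}$ are assumed to have cardinality exactly one) makes explicit a small point the paper leaves implicit.
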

\begin{proof}
Suppose that $\lambda, \mu \in \Lambda_0$ and observe that $\lambda - \mu \in \Der_\P(A)$. If $\lambda - \mu$ vanishes on the generators $x_1,...,x_n$
then by the Leibniz rule it vanishes on all of $A$. The lemma follows.
\end{proof}

The next proof follows the same principle as Artin's linear independence of characters of a group.
\begin{Proposition}\label{PoissHom}
Every nonzero Poisson ideal contains a nonzero homogeneous element.
\end{Proposition}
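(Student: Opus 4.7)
The plan is to adapt the \emph{minimal support} trick underlying Artin's linear independence of characters. Since $A$ is Poisson graded as $A = \bigoplus_{\mu \in \Lambda} A_\mu$, every $a \in A$ has a unique decomposition $a = \sum_{\mu} a_\mu$ with $a_\mu \in A_\mu$; I will call the number of distinct $\mu$ with $a_\mu \neq 0$ the \emph{support size} of $a$. Given a nonzero Poisson ideal $I$, I pick $a \in I$ nonzero of minimal support size $k$ and argue by contradiction that $k \geq 2$ is impossible, which forces $a$ to be homogeneous.

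Write $a = a_{\mu_1} + \cdots + a_{\mu_k}$ with the $\mu_i \in \Lambda$ distinct and each $a_{\mu_i} \neq 0$. Assuming $k \geq 2$, Lemma~\ref{derivationlemma} applied to $\Lambda_0 = \{\mu_1, \dots, \mu_k\}$ produces some generator $x_j$ for which the values $\mu_1(x_j), \dots, \mu_k(x_j)$ are not all equal; after relabelling, $\mu_1(x_j) \neq \mu_2(x_j)$. I then form
$$b := \{x_j, a\} - \mu_1(x_j)\, a \in I,$$
which lies in $I$ because $I$ is a Poisson ideal. Expanding using that each $a_{\mu_i}$ is Poisson normal of weight $\mu_i$, the bracket becomes $\{x_j, a\} = \sum_i \mu_i(x_j) a_{\mu_i}$, and the $i=1$ term cancels against $\mu_1(x_j) a$ by design, leaving
$$b = \sum_{i=2}^{k}\bigl(\mu_i(x_j) - \mu_1(x_j)\bigr)\, a_{\mu_i}.$$

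To conclude I invoke Proposition~\ref{AWP}: since $x_j \in A_{\lambda_j}$ and each $\mu_i$ preserves weight spaces, $\mu_i(x_j) \in A_{\lambda_j}$, so each summand of $b$ sits in the weight space $A_{\lambda_j + \mu_i}$, and as $i$ varies these weights are pairwise distinct. The $i=2$ summand is the product of two nonzero elements in the domain $A$, hence nonzero, so $b$ itself is nonzero. Yet its support size is at most $k-1$, contradicting minimality.

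The step most vulnerable to failure — and hence the one to guard carefully — is the claim that the correction term $\mu_1(x_j)\, a$ genuinely kills the $i=1$ component \emph{without} disturbing the weight homogeneity of the remaining summands. This is precisely what the abelian weight property buys us via Proposition~\ref{AWP}: each $\mu_i(x_j)$ is itself homogeneous (of weight $\lambda_j$), so multiplication by $\mu_i(x_j) - \mu_1(x_j)$ shifts weight spaces uniformly and the final support count is honest. Once this is in hand the argument is essentially formal, and the domain hypothesis on $A$ ensures the nonvanishing of the crucial summand.
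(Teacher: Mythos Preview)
Your proof is correct and follows essentially the same approach as the paper's: pick an element of $I$ of minimal support, use Lemma~\ref{derivationlemma} to find a generator $x_j$ separating two of the weights, and subtract $\mu_1(x_j)a$ from $\{x_j,a\}$ to produce a nonzero element of $I$ with strictly smaller support. Your write-up is in fact slightly more explicit than the paper's about why the resulting element is nonzero and why its homogeneous components land in distinct weight spaces (invoking the domain hypothesis and the weight-space preservation from Proposition~\ref{AWP}), but the underlying idea is identical.
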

\begin{proof}
Let $I$ be a Poisson ideal. For each $a\in I$ we may decompose $a = \sum_{\lambda \in \Lambda} a_\lambda$  with $a_\lambda \in A_\lambda$ and write
$\ell(a) = \hash \Lambda(a)$ where $\Lambda(a) := \{\lambda\in \Lambda \mid a_\lambda \neq 0\}$. We show that $I$ contains an element with $\ell(a) = 1$.
Pick $a \in I$ such that $\ell(a) > 1$ is minimal.
Recall that each $x_i$ is homogeneous of weight $\lambda_i$.
By Proposition~\ref{AWP} the derivations $\lambda \in \Lambda$ preserve the grading and so for any $i\in\{1,...,n\}$ we have 
\[\{x_i, a\}=\sum_{\lambda \in \Lambda(a)} \lambda(x_i) a_\lambda \in \bigoplus_{\lambda\in\Lambda(a)} A_{\lambda+\lambda_i}.\]
% by which we mean that $\lambda(x_i) a_\lambda$ is the $\lambda$-component of $\{x_i, a\}$. 
From Lemma \ref{derivationlemma} there is some $i \in \{1,...,n\}$ such that $\mu_1(x_i) \neq \mu_2(x_i)$ for some $\mu_1,\mu_2 \in \Lambda(a)$.
%By the hypothesis $\mu_1(x_i) \neq \mu_2(x_i)$ 
Thus, for this choice of $i$, the expression $\mu_1(x_i) a - \{x_i, a\}$ is non-zero, lies in $I$ and has $\ell(\mu_1(x_i) a - \{x_i, a\}) < \ell(a)$.
This contradicts the minimality of $\ell(a)$ and the contradiction proves the claim.
\end{proof}
\begin{Remark}
In general it is not true that every Poisson ideal in a generalised Poisson affine space is generated by homogeneous elements.
For example, let $A := \k[x_1, x_2]$ with Poisson bracket given by $\{x_1, x_2\} = x_1x_2$. It is not hard to see that the Poisson normal elements
are precisely the monomials in $x_1, x_2$, however for all $s \in \k$ the ideal $(x_1, x_2 - s)$ is Poisson.
\end{Remark}

We now recall a few facts about modules over Poisson algebras, required in the proof of Proposition \ref{RatCas}. 
A \emph{Poisson $A$-module} is a vector space $V$ equipped with two linear maps $A \rightarrow \End_\k(V)$, which we write
\begin{eqnarray*}
& & a \longmapsto m(a);\\
& & a \longmapsto \delta(a),
\end{eqnarray*}
such that $m$ is a representation of $A$ as an associative algebra, $\delta$ is a representation of $A$ as a Lie algebra, and
\begin{eqnarray}
& & \delta(ab) = m(a) \delta(b) + m(b) \delta(a);\\
& & m(\{a,b\}) = [\delta(a), m(b)].
\end{eqnarray}
Obviously $A$ is a Poisson module over itself, with $\delta(a)b := \{a,b\}$ and $m(a)b := ab$.
Poisson ideals of $A$ provide more examples of Poisson modules, and yet another source of examples is provided by the fraction field $\Frac(A)$
which inherits a Poisson structure from $A$ and admits $A$ as a Poisson subalgebra.
\begin{Lemma}
Let $W$ be a Poisson $A$-module and let $U, V \subseteq W$ be Poisson submodules.
The set $$(U: V) := \{a\in A \mid m(a) U \subseteq V \text{ and } \delta(a)U \subseteq V\}$$
is a Poisson ideal of $A$.
\end{Lemma}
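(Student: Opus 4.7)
The plan is a direct verification using the two compatibility axioms for Poisson modules, together with the fact that both $U$ and $V$ are stable under $m(A)$ and $\delta(A)$. First I would check that $(U:V)$ is a two-sided ideal in the associative sense. For $a\in (U:V)$ and $b\in A$, using $m(ba)=m(b)m(a)$ one gets $m(ba)U\subseteq m(b)V\subseteq V$. For the derivation part, the Leibniz-type identity $\delta(ba)=m(a)\delta(b)+m(b)\delta(a)$ gives
\[
\delta(ba)U \subseteq m(a)\delta(b)U + m(b)\delta(a)U \subseteq m(a)U + m(b)V \subseteq V,
\]
where the key input is that $\delta(b)U\subseteq U$ because $U$ is a Poisson submodule. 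Hence $ba\in(U:V)$.

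Next I would verify that $\{A,(U:V)\}\subseteq (U:V)$. Fix $a\in (U:V)$ and $b\in A$. The compatibility axiom $m(\{b,a\})=[\delta(b),m(a)]$ yields
\[
m(\{b,a\})U = \delta(b)m(a)U - m(a)\delta(b)U \subseteq \delta(b)V + m(a)U \subseteq V,
\]
again using that $U$ (resp.\ $V$) is stable under both actions. Since $\delta$ is a Lie-algebra representation we have $\delta(\{b,a\})=[\delta(b),\delta(a)]$, so
\[
\delta(\{b,a\})U = \delta(b)\delta(a)U - \delta(a)\delta(b)U \subseteq \delta(b)V + \delta(a)U \subseteq V,
\]
by the same reasoning. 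Therefore $\{b,a\}\in (U:V)$, finishing the proof.

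There is no substantial obstacle here; the statement is essentially an exercise in unpacking the Poisson-module axioms. The only thing one must be slightly careful about is using the submodule property of \emph{both} $U$ and $V$ at the correct places, namely that $U$ absorbs $\delta(A)$ when the expression $m(a)\delta(b)U$ or $\delta(a)\delta(b)U$ appears, and that $V$ absorbs $m(A)$ and $\delta(A)$ when an output $m(a)U$ or $\delta(a)U$ has already been pushed into $V$.
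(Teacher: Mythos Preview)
Your proof is correct and follows essentially the same verification as the paper: both arguments unpack the four identities $m(ab)$, $\delta(ab)$, $m(\{a,b\})$, $\delta(\{a,b\})$ and use that $U$ and $V$ are stable under $m(A)$ and $\delta(A)$. The only cosmetic differences are that you work with $ba$ and $\{b,a\}$ and phrase things at the level of subsets rather than elementwise.
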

\begin{proof}
Let $a\in (U:V)$, $b \in A$ and $u\in U$. We have
\begin{eqnarray*}
& & m(ab)u = m(a) (m(b) u) \in V;\\
& & m(\{a,b\})u = [\delta(a), m(b)] u \in V;\\
& & \delta(ab)u = m(a) \delta(b) u + m(b) \delta(a) u \in V;\\
& & \delta(\{a,b\})u = [\delta(a), \delta(b)] u \in V, 
\end{eqnarray*}
and so $ab, \{a,b\} \in (U:V)$.
\end{proof}

The following result says that every rational Casimir is a quotient of two normal elements. Our approach
was inspired by the corresponding statement in symmetric algebras of Lie algebras, first proven in \cite{DDV}.
\begin{Proposition}\label{RatCas}
Consider the set $$Q_\lambda = \{ab^{-1} \mid a,b\in A_\lambda\} \subseteq \Frac(A).\smallskip$$
We have $$\Cas(\Frac(A)) = \bigcup_{\lambda \in \Lambda} Q_\lambda.$$
In other words, every Casimir of $\Frac(A)$ is a quotient of homogeneous elements of $A$ of same weight.
\end{Proposition}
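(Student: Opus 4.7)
The plan is to prove both inclusions. The easy direction $\bigcup_\lambda Q_\lambda \subseteq \Cas(\Frac(A))$ is a direct Leibniz computation: for $a, b \in A_\lambda$ with $b \neq 0$ and any $c \in A$,
$$\{c, ab^{-1}\} = \{c,a\}b^{-1} + a\{c, b^{-1}\} = \lambda(c)ab^{-1} - ab^{-2}\lambda(c)b = 0,$$
and this extends to $\{c, ab^{-1}\} = 0$ for all $c \in \Frac(A)$ by the Leibniz rule, so $ab^{-1} \in \Cas(\Frac(A))$.

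For the reverse inclusion, fix a nonzero $q \in \Cas(\Frac(A))$ and consider the \emph{denominator ideal}
$$I := \{a \in A \mid aq \in A\} \subseteq A,$$
which is nonzero since any representation $q = xy^{-1}$ with $x,y \in A$, $y \neq 0$, shows $y \in I$. The first key step is to check that $I$ is a \emph{Poisson} ideal; this is where the Casimirness of $q$ enters essentially. For $a \in I$ and $b \in A$,
$$\{b, a\}q = \{b, a\}q + a\{b, q\} = \{b, aq\} \in A,$$
because $\{b, q\} = 0$ and $aq \in A$, and $A$ is a Poisson subalgebra of $\Frac(A)$; hence $\{b, a\} \in I$. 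Alternatively, $Aq$ is a Poisson $A$-submodule of $\Frac(A)$ (using $\delta(b)(cq) = \{b, c\}q$), and the preceding $(U{:}V)$-Lemma applied with $W = \Frac(A)$, $V = A$, $U = Aq$ gives the same conclusion.

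The second key step is to feed $I$ into Proposition~\ref{PoissHom}, producing a nonzero homogeneous element $b \in I \cap A_\lambda$ for some $\lambda \in \Lambda$. Setting $a := bq \in A$ and using once more that $q$ is Casimir,
$$\{c, a\} = \{c, b\}q + b\{c, q\} = \lambda(c)bq = \lambda(c)a \quad\text{for all } c \in A,$$
so $a \in A_\lambda$ and hence $q = ab^{-1} \in Q_\lambda$, as desired. The main conceptual hurdle is the first step, the observation that the denominator ideal of a Casimir is automatically Poisson; once this is in place, the abelian weight property bites via Proposition~\ref{PoissHom}, and the numerator $bq$ inherits the same weight as $b$ essentially for free.
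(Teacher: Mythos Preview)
Your proof is correct and follows the same overall strategy as the paper: locate a nonzero Poisson ideal attached to the Casimir $q$, invoke Proposition~\ref{PoissHom} to extract a homogeneous element, and then observe that multiplying $q$ by this element yields another element of the same weight. The paper packages the relevant ideal as the transporter $(U:A)$ for the Poisson submodule $U = Aq \subseteq \Frac(A)$, appealing to the preceding $(U:V)$-Lemma to see it is Poisson, and then checks by hand that $b^2 \in (U:A)$ when $q = ab^{-1}$. Your denominator ideal $I = \{a \in A \mid aq \in A\}$ is in fact the same ideal (once one unwinds the $m$- and $\delta$-conditions using that $q$ is Casimir), but your direct one-line verification that $I$ is Poisson, together with the immediate observation that any denominator of $q$ lies in $I$, is cleaner and sidesteps the Poisson-module formalism entirely. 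The trade-off is that the paper's $(U:V)$-Lemma is a reusable tool, whereas your argument is tailored to this situation; for the purposes of this proposition, your route is the more economical one.
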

\begin{proof}
The fact that the elements of $Q_\lambda$ are Casimirs follows from a short calculation in $\Frac(A)$,
which we leave to the reader. Let $ab^{-1} \in \Cas(\Frac(A))$ and consider the Poisson $A$-submodule $U \subseteq \Frac(A)$ generated by $ab^{-1}$.
Since $ab^{-1}$ is a Casimir the map $A \rightarrow U$ sending $c$ to $cab^{-1}$ is an isomorphism of Poisson modules.
According to the previous lemma the space $(U : A)$ is a Poisson ideal of $A$. We claim that $(U : A) \neq 0$. For all $c\in A$ we have
\begin{eqnarray*}
& & b^2cab^{-1} = bca \in A;\\
& & \{b^2, cab^{-1}\} = 2b\{b, cab^{-1}\} = 2b\{b,c\}ab^{-1} = 2\{b,c\}a \in A.
\end{eqnarray*}
It follows that $b^2 \in (U:A) \neq 0$. Now we may apply Proposition~\ref{PoissHom} to deduce that $(U: A)$
contains a nonzero homogeneous element $c \in A_\lambda$. By definition we have $cab^{-1} = d \in A$ and since $ab^{-1}$
is a Casimir it follows that $d \in A_\lambda$. Now we have equality $ab^{-1} = dc^{-1}$ in $\Frac(A)$ which shows that $ab^{-1} \in Q_\lambda$.
\end{proof}

\begin{Corollary}
\label{depn}
If $\Cas \Frac A$ is a finite extension of $\k$ then for every $\lambda \in \Lambda$, every two elements $a,b \in A_\lambda$
are algebraically dependent over $\k$, ie. there is a non-zero $f \in \k[X,Y]$ such that $f(a,b) = 0$.
\end{Corollary}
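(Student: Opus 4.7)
The plan is to reduce the problem immediately to Proposition~\ref{RatCas} and then clear denominators. The key observation is that, by the definition of $Q_\lambda$, any quotient $ab^{-1}$ of two nonzero elements of $A_\lambda$ sits inside $Q_\lambda \subseteq \Cas(\Frac A)$, so the finiteness hypothesis on $\Cas(\Frac A)$ forces such a quotient to be algebraic over $\k$; this is essentially the whole content.

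First I would dispose of the trivial case: if $a = 0$ then $f(X,Y) = X$ works, and similarly if $b = 0$ then $f(X,Y) = Y$ works. So I may assume both $a,b \in A_\lambda$ are nonzero. Since $A$ is an integral domain, $b$ is invertible in $\Frac A$, and then by Proposition~\ref{RatCas} the element $ab^{-1}$ belongs to $Q_\lambda \subseteq \Cas(\Frac A)$.

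Next, I use the hypothesis that $\Cas(\Frac A)$ is a finite field extension of $\k$: any element of a finite extension is algebraic over $\k$, so there exists a nonzero polynomial $g(T) = \sum_{i=0}^d c_i T^i \in \k[T]$ with $g(ab^{-1}) = 0$. Multiplying this equation through by $b^d$ (valid in $\Frac A$) yields
\[
\sum_{i=0}^d c_i a^i b^{d-i} = 0 \quad \text{in } A.
\]
Setting $f(X,Y) := \sum_{i=0}^d c_i X^i Y^{d-i} \in \k[X,Y]$ therefore gives $f(a,b)=0$. Finally, $f \neq 0$ as a polynomial because its coefficient vector coincides with the nonzero coefficient vector of $g$, so $f$ provides the required algebraic dependence.

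There is no real obstacle here; the only small point to verify carefully is that $f$ remains nonzero as a two-variable polynomial after clearing denominators, which is immediate since the distinct monomials $X^iY^{d-i}$ for $0 \leq i \leq d$ are $\k$-linearly independent in $\k[X,Y]$.
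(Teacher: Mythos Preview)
Your argument is correct and in fact more direct than the paper's. You only need the easy inclusion $Q_\lambda \subseteq \Cas(\Frac A)$ (which is the part of Proposition~\ref{RatCas} left to the reader, not its substantive content), and then algebraicity of $ab^{-1}$ over $\k$ together with a denominator-clearing step gives the dependence immediately.

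The paper instead argues the contrapositive: assuming $a,b$ are algebraically independent, it exhibits the infinite family $\{a/(b-sa) \mid s \in \k\}$ of Casimirs and shows these are $\k$-linearly independent, contradicting finiteness of $\Cas(\Frac A)$ over $\k$. This is slightly more work (one must verify linear independence by clearing denominators and reducing modulo the ideals $(b-s_ja)$), and it uses that $\k$ is infinite. Your route avoids this detour entirely: once you know $ab^{-1}$ is a Casimir, the hypothesis hands you a minimal polynomial for free, and homogenising it is the whole proof. One minor remark: the membership $ab^{-1}\in Q_\lambda$ is by the definition of $Q_\lambda$, not by Proposition~\ref{RatCas}; what you are really invoking from that proposition is only the containment $Q_\lambda \subseteq \Cas(\Frac A)$.
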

\begin{proof}
Suppose that $a,b\in A_\lambda$ are algebraically independent. We claim that the set
$$\left\{\frac{a}{b - sa} \mid s \in \k\right\}$$
is a $\k$-linearly independent subset of $\Cas \Frac(A)$. Since $\k$ is a field of characteristic zero
it has infinite cardinality and so this claim will prove the lemma. 
Since these are fractions of Poisson normal elements of the same weight $\lambda$
they are Casimirs as claimed. Suppose that $s_1,...,s_n \in \k$
are distinct elements and suppose that $t_1,...,t_n \in \k$ are some elements such that
$$\sum_i t_i \frac{a}{b-s_ia} = 0.$$
Clearing the denominators and using the fact that $\k[a,b]$ is an integral domain we get
$$\sum_i t_i \prod_{j\neq i} (b - s_j a) = 0.$$
Since this equation holds in the polynomial ring $\k[a,b]$ it holds modulo the ideal $(b- s_j a) \unlhd \k[a,b]$ with $j=1,...,n$.
This gives $t_1 = t_2 = \cdots = t_n = 0$ and this proves the claim.
\end{proof}

\begin{proofofDM}
By \cite[1.7, 1.10]{Oh} the locally closed ideals are Poisson primitive and the latter ideals are rational, so it suffices to check that rational ideals are locally closed.
If $I$ is a Poisson prime ideal of $A$, then $A/I$ is a generalised Poisson affine space by Lemma~\ref{thequotient}. 
After replacing $A$ by $A/I$, this reduces the proof to showing that if $\Cas \Frac A$ is a finite extension of $\k$ then the zero ideal in $A$ is locally closed.

Let $I$ be an nonzero Poisson prime ideal. We claim that $I$ contains at least one of the generators $x_1,...,x_n$. By Proposition~\ref{PoissHom}
we know that there is a nonzero element $a\in I \cap A_\lambda$ for some $\lambda \in \Lambda$. By Lemma~\ref{AWP}
the monoid $\Lambda$ is finitely generated by the weights $\lambda_1,...,\lambda_n$ of the generators $x_1,...,x_n$
and so we may assume that $\lambda = \sum_{i=1}^n m_i \lambda_i$ for non-negative integers $m_1,...,m_n$.
It follows that $b := x_1^{m_1}\cdots x_n^{m_n} \in A_\lambda$.

Consider the polynomial ring $R := k[X, Y]$ and define a $\Lambda$-grading on $R$ by placing both $X$ and $Y$ in degree $\lambda$.
Consider the homomorphism
\begin{eqnarray*}
\phi & : & R \longrightarrow A;\\
& & X \longmapsto a;\\
& & Y \longmapsto b.
\end{eqnarray*}
It is evidently a homogeneous morphism with respect to the $\Lambda$-gradings on $A$ and $R$, and so the kernel is homogeneously
generated. Furthermore, by Corollary~\ref{depn}, $\Ker \phi \neq 0$, and so we can choose $f(X, Y) = \sum_{i=0}^d s_i X^i Y^{d-i}$
where $s_0, ...,s_d \in \k$ such that $f(a,b) = 0$. If $s_i = 0$ for $i < d$ then this relation says that $s_d a^d = 0$. Since $A$ is an integral domain
we know that this is not the case. Hence $s_i \neq 0$ for some $i < d$. Suppose that $m= \min\{ i \mid s_i \neq 0\}$ and observe that

$$f(a,b) = a^{m} \sum_{i=m}^d s_i a^{i-m} b^{d-i} = 0.$$

\noindent Using the fact that $A$ is an integral domain once again we see that
$$\sum_{i=m}^d s_i a^{i-m} b^{d-i} = 0$$
with $s_m \neq 0$. We can rewrite this as
$$s_m b^{d-m} = -\sum_{i=m+1}^d s_i a^{i-m} b^{d-i} \in I$$
We have now shown that $I$ contains a monomial of the form
$b=x_1^{m_1} \cdots x_n^{m_n}$. Since $I$ is prime it must be that it contains one of the elements $x_1,...,x_n$ as claimed.

The deductions made above imply that the zero ideal is equal to the following open subset of $\PSpec(A)$:
$$\{(0)\} = \bigcap_{i=1}^n \{P \in \PSpec(A) \mid (x_i) \nsubset P\}.$$
As a consequence $(0)$ is locally closed and the proof is complete. $\hfill \qed$
\end{proofofDM}

\begin{Remark}
In this paper we assumed throughout that $A$ is an integral domain, however this hypothesis can be removed when $A$ is noetherian, reduced
and the minimal prime ideals $\p_1,...,\p_n$ are pairwise coprime. When $A$ is such a Poisson algebra the $\p_1,...,\p_n$ are all Poisson \cite[Lemma~1.1]{Po} and so the natural map
$A \rightarrow A/\p_1 \times \cdots \times A/\p_n$ is a Poisson homomorphism. The map is surjective by the Chinese remainder theorem
and the kernel is $\bigcap_i \p_i = 0$. Now our results can be applied to each of the direct factors $\{A/\p_i \mid i=1,...,n\}$.  Geometrically this
just corresponds to a Poisson variety with disjoint irreducible components.
\end{Remark}

\end{document}